\DeclareMathOperator{\gap}{gap}
\newtheorem{theorem}{Theorem}
\newtheorem{lemma}[theorem]{Lemma}
\newtheorem{corollary}[theorem]{Corollary}
\begin{document}

\ifthenelse{\boolean{amsart}}{
\title[The rate of growth of $Q(n,\lceil
rn\rceil)$]{The rate of growth of the minimum clique size of graphs of given
order and chromatic number}
}{
\title{The rate of growth of the minimum clique size of graphs of given
order and chromatic number}
}

\ifthenelse{\boolean{amsart}}{
\author{Csaba Bir\'o}
\address[Csaba Bir\'o]{Department of Mathematics, University of Louisville, Louisville, KY 40292}
\email{csaba.biro@louisville.edu}
\author{Kris Wease}
\address[Kris Wease]{Department of Mathematics, University of Louisville, Louisville, KY 40292}
}{
\author{Csaba Bir\'o\footnote{Department of Mathematics, University of
Louisville, Louisville, KY 40292, \texttt{csaba.biro@louisville.edu}} \and
Kris Wease\footnote{Department of Mathematics, University of
Louisville, Louisville, KY 40292}}
\date{}
}

\ifthenelse{\boolean{amsart}}{}{\maketitle}

\begin{abstract}
Let $Q(n,c)$ denote the minimum clique number over graphs with $n$ vertices and
chromatic number $c$. We determine the rate of growth of the sequence
$\{Q(n,\lceil rn \rceil)\}_{n=1}^\infty$ for any fixed $0<r\leq 1$. We also give
a better upper bound for $Q(n,\lceil rn \rceil)$.
\end{abstract}

\ifthenelse{\boolean{amsart}}{\maketitle}{}

\section{Introduction}\label{S:intro}

Let $\omega(G),\alpha(G),$ and $\chi(G)$ denote the clique number, independence
number, and chromatic number, respectively, of a graph $G$. We will also use
$|G|$ to denote the number of vertices and $\|G\|$ to denote the number of
edges of $G$. Furthermore, let
\[
\omega(n,k)=\min\{\omega(G):|G|=n\text{ and }\alpha(G)\leq k\}
\]
the inverse Ramsey number.
Define
\[
Q(n,c)=\min\{\omega(G) : |G|=n \text{ and } \chi(G)=c\}.
\]
The goal of this research is to determine $Q(n,c)$ as exactly as possible.

Bir\'o, F\"uredi, and Jahanbekam \cite{Bir-Fur-Jah-13} gave an exact formula
for $Q(n,c)$ for the case when $c\geq (n+3)/2$ in terms of inverse Ramsey
numbers. They proved the following.
\begin{theorem}\label{thm:bfj}
For $n\geq 2k+3$
\[
Q(n,n-k)=n-2k+q(k)
\]
where
\[
q(k)=\min\sum_{i=1}^s(\omega(2k_i+1,2)-1)
\]
where the minimum is taken over positive integers $k_1,\ldots,k_s$ with
$k_1+\cdots+k_s=k$, and $s\leq 3$.
\end{theorem}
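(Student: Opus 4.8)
\emph{Proof proposal.} The plan is to establish the two bounds separately.

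\emph{Upper bound $Q(n,n-k)\le n-2k+q(k)$.} Fix positive integers $k_1,\dots,k_s$ with $k_1+\dots+k_s=k$ and $s\le 3$ attaining the minimum in the definition of $q(k)$. For each $i$, choose a graph $R_i$ on $2k_i+1$ vertices with $\alpha(R_i)\le 2$ and $\omega(R_i)=\omega(2k_i+1,2)$. One first checks that $R_i$ may, in addition, be taken with $\chi(R_i)=k_i+1$: the inequality $\chi(R_i)\ge\lceil(2k_i+1)/2\rceil=k_i+1$ is forced by $\alpha(R_i)\le 2$, and equality amounts to asking that the complement of $R_i$, a triangle-free graph, have a matching covering all but one of its vertices, which can be arranged among the extremal graphs. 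Now let $G$ be the join of $R_1,\dots,R_s$ together with a clique on $n-2k-s$ vertices; this is possible since $n\ge 2k+3\ge 2k+s$. Because $\chi$ and $\omega$ are both additive over joins,
\[
\chi(G)=\sum_{i=1}^{s}(k_i+1)+(n-2k-s)=n-k,\qquad\omega(G)=\sum_{i=1}^{s}\omega(2k_i+1,2)+(n-2k-s)=n-2k+q(k),
\]
so $Q(n,n-k)\le n-2k+q(k)$.

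\emph{Lower bound $Q(n,n-k)\ge n-2k+q(k)$.} Let $G$ satisfy $|G|=n$ and $\chi(G)=n-k$, and pass to the complement $H=\overline{G}$, so that $\omega(G)=\alpha(H)$ and the hypothesis becomes $\theta(H)=n-k$, where $\theta(H)$ denotes the least number of cliques needed to cover $V(H)$. Fix a minimum clique cover of $H$; let $I$ be the set of vertices appearing as singleton cliques, and $C_1,\dots,C_t$ the cliques of size at least $2$. Minimality makes $I$ independent in $H$ (two adjacent singletons could be merged) and forces $\sum_{i=1}^{t}(|C_i|-1)=k$, whence $t\le k$ and $|I|=n-k-t$. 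A refinement of the same observation shows that each $C_j$ contains a vertex with no neighbour in $I$: otherwise, choosing for each vertex of $C_j$ an adjacent vertex of $I$ and replacing each singleton $\{w\}$ (with $w\in I$) by the clique formed by $w$ and all vertices of $C_j$ assigned to $w$ yields a cover of $H$ with only $|I|+(t-1)<n-k$ cliques. Let $D$ be the set of all vertices of $C_1\cup\dots\cup C_t$ having no neighbour in $I$. Then $I$ together with any independent subset of $D$ is independent, so $\alpha(H)\ge|I|+\alpha(H[D])=n-k-t+\alpha(H[D])$, and it remains to show that, for a suitable minimum clique cover, $\alpha(H[D])\ge q(k)+t-k$.

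For this last estimate one analyses $H[D]$, which is covered by the at most $t$ cliques $C_j\cap D$, together with the way the cliques $C_j$ attach to one another, all under the constraint that the whole cover is minimum. I would argue that, with the cover chosen appropriately, the genuinely constrained portion of this configuration lives on at most three groups of the cliques, and that within each group there emerges an induced subgraph of independence number $2$ on an appropriate odd number $2k_i+1$ of vertices, to which the definition of $\omega(\cdot\,,2)$, hence a term of $q(k)$, applies; combining the contributions of these at most three groups with $I$ then produces a clique of $G$ of size $n-2k+\sum_i(\omega(2k_i+1,2)-1)\ge n-2k+q(k)$.

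The bound $s\le 3$ is the crux of the argument and the step I expect to be the main obstacle. The defect $k=\sum_i(|C_i|-1)$ can a priori be distributed over as many as $t=k$ cliques, and one must show that any such spread-out configuration can be reorganised into at most three ``Ramsey blocks'' with no loss in the size of the resulting clique; it is this reorganisation that both demands a delicate case analysis and pins the answer down from below---in particular it implicitly yields the non-obvious fact that, in the definition of $q(k)$, no partition into four or more parts ever beats the best partition into at most three. By contrast, the point in the upper bound that some extremal graph on $2k_i+1$ vertices with independence number $2$ has chromatic number exactly $k_i+1$ is a minor and essentially routine matter.
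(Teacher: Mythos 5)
First, a caveat: the paper does not prove Theorem~\ref{thm:bfj} at all --- it is quoted from \cite{Bir-Fur-Jah-13} as background --- so there is no in-paper proof to measure your attempt against, and I can only judge the proposal on its own terms. Your upper bound is essentially sound: the join of extremal Ramsey graphs $R_1,\dots,R_s$ with a clique on $n-2k-s$ vertices has $n$ vertices, and additivity of $\chi$ and $\omega$ over joins gives the right counts. The one claim you should not lean on is that an extremal graph for $\omega(2k_i+1,2)$ can always be chosen with $\chi(R_i)=k_i+1$ exactly (equivalently, with a near-perfect matching in its triangle-free complement); this is not obviously true, and it is also unnecessary, since you already have $\chi(G)\ge n-k$ and can delete edges one at a time (which never increases $\omega$) until $\chi(G)=n-k$ exactly, precisely the trick the paper uses in its own constructions.

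The genuine gap is the lower bound. Your reductions are correct as far as they go: passing to the complement, the minimality arguments showing that $I$ is independent, that $\sum(|C_i|-1)=k$, and that each $C_j$ contains a vertex with no neighbour in $I$, and the inequality $\alpha(H)\ge |I|+\alpha(H[D])=n-k-t+\alpha(H[D])$. But the required estimate $\alpha(H[D])\ge q(k)-(k-t)$ is exactly the content of the theorem, and for it you offer only a description of what you ``would argue'': that the configuration of the cliques $C_1,\dots,C_t$ can be reorganised into at most three blocks, each contributing an induced subgraph of independence number $2$ on an odd number $2k_i+1$ of vertices. No mechanism is supplied for producing these blocks, for controlling their sizes so that the $k_i$ sum to $k$, or for why three blocks suffice rather than four or $t$; as you yourself acknowledge, this is the crux. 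That step is the hard combinatorial core of the Bir\'o--F\"uredi--Jahanbekam theorem (it is closely tied to the chromatic-gap analysis in \cite{Gya-Seb-Tro-12}), and without it the proposal establishes only the upper bound.
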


Liu \cite{Liu-12} determined the rate of growth of $Q(n,\lceil n/k\rceil)$ for
$k$ fixed positive integer, still, in terms of inverse Ramsey numbers.
He proved that $Q(n,\lceil n/k\rceil)=\Theta(\omega(n,k))$ for $k$ positive
integer. The natural question (also specifically posed by Liu) remained to
determine the rate of growth of the sequence in cases when $k$ is not an
integer. In this paper we provide the answer to this question proving the
following theorem.
\begin{theorem}\label{thm:main}
Fix $0<r\le1$ and let $k=\lfloor1/r\rfloor$. Then there exists $0<d_r\le
1$ such that for $n$ large enough
\[
d_r\omega(n,k)\le Q(n,\lceil rn\rceil)\le \omega(n,k).
\]
\end{theorem}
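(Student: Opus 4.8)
The plan is to establish the two inequalities separately; the right-hand one is a quick construction, while the left-hand one combines an extraction lemma with a smoothness property of the inverse Ramsey numbers.

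For the upper bound $Q(n,\lceil rn\rceil)\le\omega(n,k)$, I would start from a graph $H$ on $n$ vertices realizing $\alpha(H)\le k$ and $\omega(H)=\omega(n,k)$. Since $\chi(H)\ge|H|/\alpha(H)\ge n/k\ge rn$ and $\chi(H)$ is an integer, $\chi(H)\ge\lceil rn\rceil$. Deleting the edges of $H$ one at a time lowers the chromatic number by at most $1$ at each step and never raises the clique number, and the final edgeless graph has chromatic number $1\le\lceil rn\rceil$; hence at some stage we reach a spanning subgraph $G\subseteq H$ with $\chi(G)=\lceil rn\rceil$ and $\omega(G)\le\omega(H)=\omega(n,k)$, which gives the bound for every $n$.

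For the lower bound the crucial ingredient is the following extraction lemma: if $|G|=n$ and $\chi(G)=\lceil rn\rceil$, then $G$ has an induced subgraph $G'$ with $\alpha(G')\le k$ and $|G'|\ge\varepsilon_r n$, where $\varepsilon_r=((k+1)r-1)/k$; note that $\varepsilon_r>0$ precisely because $k=\lfloor 1/r\rfloor$ forces $r>1/(k+1)$. To prove the lemma I would iterate the removal of maximum independent sets: set $G_0=G$ and $G_{i+1}=G_i-I_i$ for $I_i$ a maximum independent set of $G_i$, and let $i^\ast$ be the least index with $\alpha(G_{i^\ast})\le k$. Put $n_i=|G_i|$, $c_i=\chi(G_i)$ and $\beta_i=(k+1)c_i-n_i$. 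Since removing an independent set decreases the chromatic number by at most one we have $c_{i+1}\ge c_i-1$, hence $\beta_{i+1}\ge\beta_i-(k+1)+|I_i|$; and for every $i<i^\ast$ one has $|I_i|=\alpha(G_i)\ge k+1$, so $\beta_0\le\beta_1\le\dots\le\beta_{i^\ast}$. On the other hand $c_{i^\ast}\le n_{i^\ast}$, whence $\beta_{i^\ast}=(k+1)c_{i^\ast}-n_{i^\ast}\le k\,n_{i^\ast}$. Comparing these with $\beta_0=(k+1)\lceil rn\rceil-n\ge((k+1)r-1)n$ gives $n_{i^\ast}\ge\beta_0/k\ge\varepsilon_r n$, so $G'=G_{i^\ast}$ works.

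Given the lemma I would conclude as follows. Because $G'$ is a graph on $|G'|$ vertices with $\alpha(G')\le k$, its clique number is at least the inverse Ramsey number $\omega(|G'|,k)$; and since $\omega(m,k)$ is nondecreasing in $m$ (delete vertices) and $|G'|\ge\varepsilon_r n$, we obtain $\omega(G)\ge\omega(G')\ge\omega(\lceil\varepsilon_r n\rceil,k)$. To pass from $\omega(\lceil\varepsilon_r n\rceil,k)$ back to $\omega(n,k)$ I would use the subadditivity $\omega(2m,k)\le 2\omega(m,k)$, obtained by taking the join of two copies of an optimal graph on $m$ vertices (this doubles the clique number while keeping the independence number at most $k$); iterating it $t=\lceil\log_2(1/\varepsilon_r)\rceil$ times yields $\omega(n,k)\le 2^{t}\,\omega(\lceil\varepsilon_r n\rceil,k)$. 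Therefore the left inequality holds with $d_r=2^{-t}\in(0,1]$ for all $n$ with $\varepsilon_r n\ge 1$. I expect the extraction lemma to be the main obstacle: the key realizations are that $\beta_i=(k+1)\chi(G_i)-|G_i|$ is a monovariant of the deletion process and that the strict inequality $r>1/(k+1)$ --- forced by $k=\lfloor 1/r\rfloor$ --- is exactly what makes $\beta_0$ a positive linear quantity, which then propagates to $|G_{i^\ast}|$; the remaining parts are routine.
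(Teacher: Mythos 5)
Your proposal is correct and follows essentially the same route as the paper: the upper bound comes from deleting edges of an optimal Ramsey graph, and the lower bound extracts an induced subgraph with independence number at most $k$ on at least $\varepsilon_r n=\frac{(k+1)r-1}{k}n$ vertices (the paper's exact constant $c_r$) and then applies monotonicity and subadditivity of $\omega(\cdot,k)$. The only cosmetic differences are that the paper removes a maximal packing of $(k+1)$-element independent sets in one step and counts colors directly, where you iterate maximum independent sets and track the monovariant $(k+1)\chi(G_i)-|G_i|$, and that the paper's final subadditivity step uses $\lceil 1/c_r\rceil$ copies rather than powers of two, giving $d_r=1/\lceil 1/c_r\rceil$ instead of your $2^{-t}$.
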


We go beyond these bounds in Section~\ref{S:upperbound}: we provide a stronger upper bound for $Q(n,\lceil
rn\rceil)$. We hope that the improved bound is close to the actual value, in fact it is plausible to believe that it is asymptotically correct.

A related line of research was done in \cite{Gya-Seb-Tro-12}, in which the
authors study the chromatic gap: $\gap(G)=\max\{ \chi(G)-\omega(G): |V(G)|=n \}$. The obvious relationship $\gap(n)= \max \{ c - Q(n,c)\}$ makes our questions slightly more general.

\section{Proof of the main theorem}
In the following proof, we generalize some of Liu's ideas to make it work
for arbitrary (non-integer) positive real numbers, though at the end the proof
is substantially different. Still, it is very interesting to note that the
jumps in the rate of growth happens when $r$ is a reciprocal of a positive
integer.

We will need the following simple lemma.
\begin{lemma}\label{l:subadditive}
For all $0<r\leq 1$, and $n,k$ positive integers, if $rn\geq k$, then
\[
\omega(\lceil
rn\rceil,k)\geq\frac{1}{\left\lceil\frac{1}{r}\right\rceil}\omega(n,k).
\]
\begin{proof}
Observe that $\omega$ is monotone and sub-additive in its first variable. Therefore
\[
\left\lceil\frac{1}{r}\right\rceil\omega(\lceil rn\rceil,k)
\geq \omega\left(\left\lceil\frac{1}{r}\right\rceil\lceil rn\rceil,k\right)
\geq \omega(n,k).
\]
\end{proof}
\end{lemma}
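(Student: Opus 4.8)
The plan is to isolate two elementary properties of the map $m\mapsto\omega(m,k)$ — monotonicity and sub-additivity in the first variable — prove each by a one-line graph construction, and then combine them with elementary ceiling arithmetic.

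\emph{Monotonicity in the first variable.} If $G$ is a graph on $m+1$ vertices with $\alpha(G)\le k$, then deleting any vertex produces a graph $G'$ on $m$ vertices with $\alpha(G')\le\alpha(G)\le k$ and $\omega(G')\le\omega(G)$. Applying this to a graph attaining $\omega(m+1,k)$ shows $\omega(m,k)\le\omega(m+1,k)$, so $\omega(\cdot,k)$ is nondecreasing. \emph{Sub-additivity in the first variable.} Let $G_1,G_2$ attain $\omega(a,k)$ and $\omega(b,k)$ respectively, and let $H=G_1\vee G_2$ be their join (all edges inside each $G_i$ together with every edge between $V(G_1)$ and $V(G_2)$). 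Every independent set of $H$ lies entirely in one of the two parts, so $\alpha(H)=\max\{\alpha(G_1),\alpha(G_2)\}\le k$; and a maximum clique of $H$ meets each part in a clique of that part with the join supplying all cross edges, so $\omega(H)=\omega(G_1)+\omega(G_2)$. Hence $\omega(a+b,k)\le\omega(a,k)+\omega(b,k)$, and iterating, $\omega(tm,k)\le t\,\omega(m,k)$ for every positive integer $t$.

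\emph{Conclusion.} Put $t=\lceil 1/r\rceil$ and $m=\lceil rn\rceil$. Sub-additivity gives $\lceil 1/r\rceil\,\omega(\lceil rn\rceil,k)\ge\omega\bigl(\lceil 1/r\rceil\lceil rn\rceil,k\bigr)$, while the trivial inequality $\lceil 1/r\rceil\lceil rn\rceil\ge(1/r)(rn)=n$ together with monotonicity gives $\omega\bigl(\lceil 1/r\rceil\lceil rn\rceil,k\bigr)\ge\omega(n,k)$. Combining and dividing by $\lceil 1/r\rceil$ yields the claimed inequality.

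There is no genuine obstacle here; the only points that need a moment's care are the identity $\omega(G_1\vee G_2)=\omega(G_1)+\omega(G_2)$ (which is exactly why the join, rather than a disjoint union, is the right construction — a disjoint union would keep $\omega$ fixed and wreck the bound) and checking that all the quantities involved are defined. The hypothesis $rn\ge k$ plays no essential role in the above beyond keeping us in the intended range (where $\lceil rn\rceil\ge k$); the argument would go through without it.
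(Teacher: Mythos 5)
Your proof is correct and follows exactly the paper's route: the paper's own argument is the same two-step chain $\lceil 1/r\rceil\,\omega(\lceil rn\rceil,k)\geq\omega(\lceil 1/r\rceil\lceil rn\rceil,k)\geq\omega(n,k)$, simply citing monotonicity and sub-additivity of $\omega$ in its first variable without proof. You have merely supplied the (correct) justifications for those two properties — vertex deletion and the join construction — so there is nothing to fix.
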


Now we will prove that $Q(n,\lceil rn\rceil)\leq \omega(n,k)$; we do this by
exhibiting a graph with $n$ vertices, chromatic number $\lceil rn\rceil$, and
clique number at most $\omega(n,k)$. Let $G$ be a Ramsey graph with $|G|=n$,
$\alpha(G)=k$, and $\omega(G)=\omega(n,k)$. Then
\[
\chi(G)\geq\left\lceil\frac{n}{k}\right\rceil=\left\lceil\frac{n}{\lfloor
1/r\rfloor}\right\rceil\geq\left\lceil\frac{n}{1/r}\right\rceil=\lceil r n
\rceil.
\]
Drop edges from $G$ until we get a subgraph $G'$ with $\chi(G')=\lceil
rn\rceil$. Then $|G'|=n$, and $\omega(G')\leq\omega(n,k)$.

Now we will prove the existence of the constant $d_r$.

Let $G$ be a graph with $|G|=n$ and $\chi(G)=\lceil rn\rceil$. In the first
step, we will show that there exists a constant $c_r$ (that only depends on
$r$), and an $H$ subgraph of $G$, such that $|H|\geq c_r n$, and $\alpha(H)\leq
k$. We will construct $H$ from $G$ by removing independent sets of size $k+1$,
as many as possible. In other words, Let $S$ be a largest collection of
disjoint independent sets of size $k+1$ in $G$, and let $H=G-S$.

From the maximality of $S$, it is clear that $\alpha(H)\leq k$. Let $t=|S|$.
Since $\chi(G)\leq t+|H|$, and $|H|=n-t(k+1)$, we have
\[
\lceil rn\rceil=\chi(G)\leq t+n-t(k+1)=n-tk.
\]
This implies $tk\leq n-\lceil rn\rceil\leq n-rn=(1-r)n$, so $t\leq (1-r)n/k$.
It follows that
\[
|H|\geq n-\frac{(1-r)n}{k}(k+1)=\left(\frac{k+1}{k}r-\frac{1}{k}\right)n
\]
Let $c_r=\frac{k+1}{k}r-\frac{1}{k}$. Recall that $k=\lfloor 1/r\rfloor$, so
$c_r$ is determined by $r$, and $r>1/(k+1)$; therefore also
$c_r>0$.

We established the existence of a subgraph $H$ with $|H|\geq c_r$ and
$\alpha(H)\leq k$. Then for large $n$,
\[
\omega(G)\geq \omega(H)\geq\omega(\lceil c_r
n\rceil,k)\geq\frac{1}{\left\lceil\frac{1}{c_r}\right\rceil}\omega(n,k),
\]
where the last inequality follows from Lemma~\ref{l:subadditive}. Hence we may
choose $d_r=1/\lceil 1/c_r\rceil$.\qed

Our constants $d_r$ provide improvements on Liu's constants in case $r$ is the
reciprocal of an integer. Indeed if $r=1/k$ for a $k$ integer and $k\to\infty$,
Liu's constants will exponentially converge to zero, while $c_r=1/k^2=r^2$.

It is also very interesting to note that as $r$ approaches the reciprocal of
integer from above, $c_r\to 0$. We tend to believe that this is just an
artifact of the proof, but it would be very interesting to see this question
settled one way or the other.

\section{Better upper bound}\label{S:upperbound}

In the previous section we only proved a weak bound for $Q(n,\lceil
rn\rceil)$, because that was all we needed to establish the rate of growth. But
that bound is certainly not optimal. In the following, we show how to get better
bounds in case $r$ is \emph{not} a reciprocal of an integer.

\begin{theorem}\label{thm:upperbound}
Let $0<r\leq 1$ such that $1/r$ is not an integer. Let $k=\lfloor 1/r\rfloor$, and let $m=n-k\lceil rn\rceil$,
$l=(k+1)\lceil rn\rceil-n$. Let $q(\beta,\alpha)=\min\sum\omega(\alpha
\beta_i,\alpha)$
where the minimum is taken over sums $\sum
\beta_i=\beta$ with $\beta_i>0$ integers. Then for large enough $n$,
\[
Q(n,\lceil rn\rceil)\leq q(l,k)+q(m,k+1).
\]
\end{theorem}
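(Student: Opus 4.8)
The plan is to construct, for $n$ large, an explicit graph $G$ on $n$ vertices with $\chi(G)\ge\lceil rn\rceil$ and $\omega(G)\le q(l,k)+q(m,k+1)$, and then to delete edges from $G$ until the chromatic number drops to exactly $\lceil rn\rceil$; since deleting edges never raises the clique number, this gives the bound. Write $c=\lceil rn\rceil$. First I would record the two identities $l+m=c$ and $kl+(k+1)m=n$, both immediate from the definitions of $l$ and $m$, and check that $l,m\ge1$ for $n$ large: because $1/r$ is not an integer, $kr<1<(k+1)r$, so $m\ge n-k(rn+1)=(1-kr)n-k$ and $l\ge(k+1)rn-n=((k+1)r-1)n$, and both of these tend to infinity. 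This is the only place the hypothesis that $1/r$ is not an integer is used (for $r=1/k$ one would get $m<0$).

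Next I would fix decompositions $l=\sum_{i=1}^s\beta_i$ and $m=\sum_{j=1}^t\gamma_j$ into positive integers that attain the minima in the definitions of $q(l,k)$ and $q(m,k+1)$; these minima are attained, being over finitely many decompositions. For each $i$ let $R_i$ be a Ramsey graph on $k\beta_i$ vertices with $\alpha(R_i)\le k$ and $\omega(R_i)=\omega(k\beta_i,k)$, and for each $j$ let $S_j$ be a Ramsey graph on $(k+1)\gamma_j$ vertices with $\alpha(S_j)\le k+1$ and $\omega(S_j)=\omega((k+1)\gamma_j,k+1)$. Since every colour class in a proper colouring is independent, $\chi(R_i)\ge k\beta_i/\alpha(R_i)\ge\beta_i$ and $\chi(S_j)\ge\gamma_j$. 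Now take $G$ to be the complete join of all the $R_i$ and all the $S_j$. Both $\omega$ and $\chi$ are additive under complete joins, so $|G|=\sum_i k\beta_i+\sum_j(k+1)\gamma_j=kl+(k+1)m=n$, $\omega(G)=\sum_i\omega(k\beta_i,k)+\sum_j\omega((k+1)\gamma_j,k+1)=q(l,k)+q(m,k+1)$, and $\chi(G)=\sum_i\chi(R_i)+\sum_j\chi(S_j)\ge\sum_i\beta_i+\sum_j\gamma_j=l+m=c$. Finally, deleting an edge decreases the chromatic number by at most $1$, while the edgeless graph on $n$ vertices has chromatic number $1<c$; so deleting edges one at a time from $G$ passes through a spanning subgraph $G'$ with $\chi(G')=c$, and $\omega(G')\le\omega(G)=q(l,k)+q(m,k+1)$, giving $Q(n,\lceil rn\rceil)\le q(l,k)+q(m,k+1)$.

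There is no deep obstacle; the construction is the whole content. The step that needs the most care is arranging $|G|=n$ exactly, which is precisely what forces the Ramsey blocks to have orders that are exact multiples of $k$ and of $k+1$ and forces the identity $kl+(k+1)m=n$; the remaining points — the chromatic lower bound coming from the join, and the edge-deletion step landing on $c$ — are routine. It is worth noting that, since $\omega(\cdot,\alpha)$ is subadditive in its first argument, one in fact has $q(\beta,\alpha)=\omega(\alpha\beta,\alpha)$, so the bound may equivalently be written $\omega(kl,k)+\omega((k+1)m,k+1)$; keeping the decomposition form merely parallels the statement of Theorem~\ref{thm:bfj}.
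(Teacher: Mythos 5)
Your proof is correct and follows essentially the same route as the paper: a complete join of Ramsey graphs of orders $k\beta_i$ and $(k+1)\gamma_j$ realizing the optimal decompositions, followed by deleting edges to bring the chromatic number down to exactly $\lceil rn\rceil$. Your closing observation that subadditivity of $\omega(\cdot,\alpha)$ forces $q(\beta,\alpha)=\omega(\alpha\beta,\alpha)$ is a fair (and slightly sharper) aside, but it does not alter the argument.
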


Before the proof, let us comment on the requirement on $1/r$. Notice that for
large $n$, we have $m>0$; in other words, for all $r$ that is not a reciprocal
of an integer there exists $N$ such that $n>N$ implies $n-k\lceil rn\rceil>0$.
Therefore, the quantity $q(m,k+1)$ in the statement is well-defined. If we do
not set the requirement on $r$, the statement breaks down at reciprocals of
integers due to some rounding problems. Note that, on the other hand, $l>0$ is
always true, because the rounding in that case works in our favor.

It may seem that the requirement on $r$ takes away from the power of the
theorem, but in fact if $r$ is close to the reciprocal of an integer, $m$ will
get close to zero, and then the statement of the theorem is hardly an
improvement on Theorem~\ref{thm:main}. In fact, it is expected that this
approach would not prove any better bounds for exact reciprocals of integers.

The motivation of the theorem is that we do not believe that the jumps in the
rate of growth proven in Theorem~\ref{thm:main} show the whole picture.
Between these jumps, far from reciprocals of integers, the upper bound can be
improved, as it is demonstrated by the theorem.

\begin{proof}[Proof of Theorem~\ref{thm:upperbound}]
We will exhibit a graph on $n$ vertices (for large $n$) with chromatic number $\lceil rn\rceil$
and clique number at most $q(l,k)+q(m,k+1)$. To do this, let $l_1,\ldots,l_a$
be the numbers that minimize $q(l,k)$, and let $m_1,\ldots,m_b$ be the
numbers that minimize $q(m,k+1)$. Let $L_1,\ldots,L_a$ be Ramsey graphs with
$|L_i|=kl_i$, $\alpha(L_i)\leq k$, and $\omega(L_i)=\omega(kl_i,k)$. Similarly,
let $M_1,\ldots,M_b$ Ramsey graphs with $|M_i|=(k+1)m_i$, $\alpha(M_i)\leq
k+1$, and $\omega(M_i)=\omega((k+1)m_i,k+1)$. Now construct $G$ by taking the
disjoint union of $L_1,\ldots,L_a,M_1,\ldots,M_b$, and add every edge between
any two of these components. Then clearly, $|G|=kl+(k+1)m=n$, and
\[
\chi(G)
=\sum_{i=1}^a\chi(L_i)+\sum_{j=1}^b\chi(M_j)
\geq \sum_{i=1}^a\frac{|L_i|}{k}+\sum_{j=1}^b\frac{|M_j|}{k+1}=l+m=\lceil
rn\rceil,
\]
furthermore
\[
\omega(G)=\sum_{i=1}^a\omega(L_i)+\sum_{j=1}^b\omega(M_j)=q(l,k)+q(m,k+1).
\]
Now apply the usual trick of dropping edges until the chromatic number is down
to $\lceil rn\rceil$ to get the example graphs.
\end{proof}

\begin{corollary} Let $0<r\leq 1$ such that $1/r$ is not an integer, and
$k,l,m$ defined as in Theorem~\ref{thm:upperbound}. Then
\[
Q(n,\lceil rn\rceil)\leq\omega(kl,k)+\omega((k+1)m,k+1).
\]
\end{corollary}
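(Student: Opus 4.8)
The corollary follows immediately from the theorem it is stated after, so the plan is essentially to observe that the minimum defining $q(\beta,\alpha)$ is taken over a nonempty set that includes the trivial one-term decomposition $\beta_1=\beta$. Concretely, applying the definition $q(\beta,\alpha)=\min\sum_i\omega(\alpha\beta_i,\alpha)$ with the single term $\beta_1=\beta$ gives $q(\beta,\alpha)\le\omega(\alpha\beta,\alpha)$. Using this twice, once with $(\beta,\alpha)=(l,k)$ and once with $(\beta,\alpha)=(m,k+1)$, yields $q(l,k)\le\omega(kl,k)$ and $q(m,k+1)\le\omega((k+1)m,k+1)$.

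Combining these with Theorem~\ref{thm:upperbound} then gives, for $n$ large enough,
\[
Q(n,\lceil rn\rceil)\le q(l,k)+q(m,k+1)\le\omega(kl,k)+\omega((k+1)m,k+1),
\]
which is exactly the claimed bound. One small point I would check is that the relevant minimands are genuinely nonempty: we need $l>0$ and $m>0$ so that $\beta_1=l$ and $\beta_1=m$ are legitimate choices of positive integer. As noted in the discussion following Theorem~\ref{thm:upperbound}, $l>0$ always holds, and $m>0$ holds for $n$ large when $1/r$ is not an integer — which is precisely the hypothesis of the corollary — so there is no issue.

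There is no real obstacle here; the corollary is just the special case of the theorem in which each $q$ is replaced by the (weaker) value coming from the trivial decomposition. If anything warrants a sentence of comment, it is why one would bother to state this weaker form at all: the point is that $\omega(kl,k)$ and $\omega((k+1)m,k+1)$ are standard inverse Ramsey quantities with no auxiliary optimization, so the corollary packages the bound in a form directly comparable to Theorem~\ref{thm:main} and to the results of Liu and of Bir\'o--F\"uredi--Jahanbekam. I would therefore keep the proof to two or three lines and let the reader supply the one-term decomposition themselves.
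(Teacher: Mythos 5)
Your proof is correct and follows exactly the paper's argument: instantiate the minimum defining $q(\beta,\alpha)$ at the trivial one-term decomposition to get $q(\beta,\alpha)\leq\omega(\alpha\beta,\alpha)$, then apply Theorem~\ref{thm:upperbound}. The extra check that $l>0$ and $m>0$ for large $n$ is a reasonable precaution but adds nothing beyond what the paper already notes in the discussion preceding the theorem.
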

\begin{proof}
By the definition of the function $q(\beta,\alpha)$ from
Theorem~\ref{thm:upperbound}, we have
$q(\beta,\alpha)\leq \omega(\alpha\beta,\alpha)$, and the statement follows.
\end{proof}
Note that $Q(n,\lceil rn\rceil)\leq \omega(kl,k)+\omega((k+1)m,k)$ is a direct
consequence of Theorem~\ref{thm:main} and sub-additivity of $\omega$ in the
first variable. But the corollary does provide actual improvements over
Theorem~\ref{thm:main}, because the function $\omega(\cdot,\cdot)$ is monotone
decreasing in the second variable.

The corollary may be weaker than Theorem~\ref{thm:upperbound}, but it has the
advantage that it expresses the upper bound as the sum of only two inverse
Ramsey numbers, as opposed to a minimum over sums of inverse Ramsey numbers,
like the thorem does.

\section{Final notes}

The bound provided by Theorem~\ref{thm:upperbound} is almost certainly not
exact, because one can probably improve on it by just choosing sizes more
carefully for the Ramsey graphs $L_i$ and $M_j$. But perhaps the more
interesting problem that is left open is to establish an asymptotically correct
formula for the sequence $Q(n,\lceil rn\rceil)$. As we mentioned above, we
believe that the bounds in Section~\ref{S:upperbound} have a good chance to be
asymptotically correct, but proving it would probably require a good
understanding of certain restricted clique packings of graphs.

\section{Acknowledgment}

The authors would like to thank the unknown referees for valuable suggestions;
in particular on the simplification of the proof of the main theorem.

\bibliography{bib}
\bibliographystyle{amsplain}

\end{document}